\documentclass[preprint,12pt]{elsarticle}

\usepackage{amssymb}
\usepackage{amsmath}
\usepackage{amsthm}
\usepackage{enumerate}
\usepackage{graphicx}
\usepackage{amsfonts}

%\usepackage{showframe}

%\usepackage{pstricks}
%\usepackage{graphpap,epsfig,pifont,epic,eepic}
%\usepackage{slashed}
%\usepackage{xcolor}
%\usepackage{pgfplots}
%\usepgfplotslibrary{fillbetween}

\topmargin=1cm \oddsidemargin=1cm \evensidemargin=1cm
\textwidth=15cm \textheight=20cm

\journal{...}

\newcommand*{\R}{\ensuremath{\mathbb{R}}}

\newtheorem{theorem}{Theorem}[section]

\newtheorem{prop}{Proposition}[section]
\newtheorem{remark}{Remark}[section]

\numberwithin{equation}{section}

\theoremstyle{definition}

\begin{document}

	\begin{frontmatter}

		\title{Remarks on a Bernstein-type operator of Aldaz, Kounchev and Render}

		\author[1]{Ana-Maria Acu}
		\author[2]{Heiner Gonska}
		\author[3]{Margareta Heilmann}

		\author[]{\hspace{11cm} {\bf Dedicated to professor Ioan Rasa on the occasion of his 70th birthday}}
		
		\address[1]{Lucian Blaga University of Sibiu, Department of Mathematics and Informatics, Str. Dr. I. Ratiu, No.5-7, RO-550012  Sibiu, Romania, e-mail: anamaria.acu@ulbsibiu.ro}
				\address[2]{University of Duisburg-Essen, Faculty of Mathematics, Bismarckstr. 90, D-47057 Duisburg, Germany, e-mail: heiner.gonska@uni-due.de }
		\address[3]{University of Wuppertal, School of Mathematics and Natural Sciences,
			Gau{\ss}stra{\ss}e 20,
			D-42119 Wuppertal, Germany, 	e-mail: heilmann@math.uni-wuppertal.de}

		\begin{abstract} 		
		The Bernstein-type operator of Aldaz, Kounchev and Render (2009) is discussed. New direct results in terms of the classical second order modulus as well as in a modification following Marsden and Schoenberg are given.
		\end{abstract}
	
		\begin{keyword} 		
		Bernstein-type operator; King operator; second order modulus of continuity; Marsden-Schoenberg; modulus of order $j$.
				
		\MSC[2020] 	41A25, 41A36.
	\end{keyword}

	\end{frontmatter}
	
%\begin{dedication}

%\end{dedication}
		
\section{Introduction} 

This brief note is dedicated to our dear friend and long term collaborator {\bf Ioan Rasa }on the occasion of his 70th birthday. Over the years he did a lot, most fruitful work on quite a number of Bernstein-type and related operators, among others. Here we deal with a then surprising, 12-year old object of this type offering a lot of challenges.

\vspace {5mm}

		Starting from the classical Bernstein operators $B_n$ defined for $f\in C[0,1]$ as
$$ B_n(f;x)=\displaystyle\sum_{k=0}^nf\left(\dfrac{k}{n}\right)p_{n,k}(x),\,\, \text{ where}$$
$$ p_{n,k}(x)={n\choose k}x^k(1-x)^{n-k}, $$
during recent years many, probably much too many, modifications have been considered. One of the present hypes follows a 2003 paper written by J.P. King \cite{1} in order to obtain linear positive operators which preserve two functions different from the classical test functions $e_0$ and $e_1$. Here we used the convention $e_j(x)=x^j,\,\, j=0,1,\dots$

King  modified the classical Bernstein operators as follows:
\begin{equation}\label{e1} f\to \left(B_nf\right)\circ r_n,\,\, f\in C[0,1], \end{equation}
where
$$r_n(x)=\left\{\begin{array}{ll}
x^2,& n=1,\\
\frac{1}{2(n-1)}\left(-1+\sqrt{1+4n(n-1)x^2}\right), & n=2,3,\dots\\
\end{array}\right. $$

These operators preserve the functions $e_i(x)=x^i$ for $i=0,2$. A slight extension was considered by C\'ardenas et al. in \cite{CGM2006} 
where  a sequence of operators $B_{n,\alpha}$ that preserve $e_0$ and $e_2+\alpha e_1$, $\alpha\in [0,+\infty)$ was introduced.
It is clear that the operator of King does not produce polynomials in the general case. Therefore, in their 2005 remarks on the article of King, Gonska and Pi\c tul (see \cite{GoPi2005}) asked the question if there exist linear and positive {\bf polynomial} operators that reproduce $e_2$.

Using a continuous strictly increasing function $\tau$ defined on $[0,1]$ with $\tau(0)=0$ and $\tau(1)=1$, $\tau'(x)>0, x\in  [0,1]$, C\'ardenas-Morales et al. (see  \cite{CGM2006}) introduced a modification of the Bernstein operator as follows:

\begin{equation}\label{e2} 
B_n^{\tau}(f;x):=\displaystyle\sum_{k=0}^n {n\choose k}\tau(x)^k(1-\tau(x))^{n-k}(f\circ \tau^{-1})\left(k/n\right), \,\, f\in C[0,1],\,\, x\in[0,1]. 
\end{equation}

Note that $\tau'(0) > 0$ is essential for their results.
A predecessor can be found in Cottin et al. \cite{CGGKZ1999}, but $\tau (x)= \sqrt{x}$  there.
\\

These operators preserve the functions $e_0$ and $\tau$. So, for $\tau(x)=x^j$, one has
$$ B_n^{\tau}(e_0;x)=e_0 \textrm{ and } B_n^{\tau}(e_j;x)=e_j.$$ In this case $B_n^{\tau}$ produces polynomials of degree $jn.$
If the function $\tau$ is not a polynomial, then the modified operator (\ref{e2}) is not a polynomial operator. 

\vspace{2mm}

%A modification of Bernstein operator which preserve $e_0$ and $e_j$ and it is a linear combination of %the Bernstein basis $\{p_{n,k}\}_{k=0,\dots,n}$ was introduced by Aldaz et al. \cite{5}. 

Fix $j>1$, $j\in {\mathbb N}$. Let  $\Pi_n$ be the space of polynomials over $[0,1]$ of degree less than or equal to $n$. For every  $n\geq j$,  Aldaz, Kounchev and Render \cite{AKR2009} introduced a {\bf polynomial} Bernstein operator $B_{n,0,j}: C[0,1]\to \Pi_n$ that fixes $e_0$ and $e_j$, and converges in the strong operator topology to the identity as $n\to \infty$. The operator is a linear combination of the classical Bernstein basis $\{p_{n,k}\}_{k=0,\dots,n}$, thus produces polynomials of degree $n$ and is explicitly given by
\begin{equation}\label{e4} B_{n,0,j}(f;x)=\displaystyle\sum_{k=0}^n f\left(t_{n,k}^j\right)p_{n,k}(x),\end{equation}
where $$t_{n,k}^j=\left(\dfrac{k(k-1)\dots(k-j+1)}{n(n-1)\dots(n-j+1)}\right)^{1/j}.$$

\begin{remark}
	\begin{itemize}
	\item[(i)] For $j=1$ this is the classical Bernstein operator (reproducing $e_0$ and $e_1$).
	
		\item[(ii)] $B_{n,0,j}$ is linear, positive and of the form
	\begin{equation}
	\label{e3} \displaystyle\sum_{k=0}^nl_{n,k}(f) p_{n,k}(x),
	\end{equation}
where	$l_{n,k}$ are positive linear functionals with $ l_{n,k}(e_0)=1$.
Obviously all the $B_{n,0,j}$s map into $\Pi_n[0,1]$ (in contrast to the $B_n^{\tau}$s from above, case $\tau = e_j$).

\item[(iii)] For $j$ fixed and $n=j$ we have $(t_{n,k}^j)_{k=0,\dots,n}=(0,\dots,0,1)$ with $n$ zeros preceding the $1$.

For $n=j+1$ one obtains $(t_{n,k}^j)_{k=0,\dots,n}=\left(0,\dots,0,n^{\frac1 j},1\right)$, and so on. That is, only for  $n$ big enough all the nodes will be distinct.
\item[(IV)] The notation $B_{n,0,j}$ is motivated by the fact that the operator reproduces $e_0$ and $e_j$. As shown by Finta in  \cite{Fin2013}, see Theorem 2.1 there, no sequence $(L_n)$ of type (\ref{e3}) can reproduce $e_i$ and $e_j$, $i,j\in\{1,2,\dots\},\,\, i<j$. So reproduction of $e_0$ is a must!

Moreover, there exist infinitely many sequences of operators $L_n$ of type (\ref{e3}) which approximate each continuous function on $[0, 1]$,
and have the functions $e_0$ and $e_j$ as fixed points, where $j\in \{1, 2,\dots\}$ is given (see \cite[Th. 2.4]{Fin2013}).

\end{itemize}
	
\end{remark}

\section{On direct estimates for $B_{n,0,j}$}

We start with some brief history.
For the case $j=2$, $n\geq 2,$ an inequality involving $\omega_1$ (following Shisha $\&$ Mond) was given by Cardenas-Morales et al. in \cite[Proposition 3.1]{CGR2012} as follows
$$|B_{n,0,2}f(x)-f(x)|\leq \omega_1(f,\delta)\left(1+\dfrac{1}{\delta}\sqrt{2x(1-x)\dfrac{1-(1-x)^{n-1}}{n-1}}\right),$$
$ f\in C[0,1],\,x\in[0,1]\textrm{ and } \delta>0.  $

%For $B_{n,0,2i}$, $n\geq 2i$  was obtained the following estimate in \cite{Finta5}
%$$ |B_{n,0,2i}(f;x)-f(x)|\leq \omega_1(f,\delta)\cdot\left[1+\delta^{-1}\sqrt[2i]{2x^i(x^i-(L_ne_i)(x))}\right], \, f\in C[0,1],\,x\in[0,1].  $$

Moreover, in 2014 Finta showed (see \cite{Fin2014})
$$ |B_{n,0,j}(f;x)-f(x)|\leq 2\omega_1\left(f; \left(\dfrac{2j(3+4j)}{\sqrt{n}}\right)^{\frac{1}{2j}}\right),$$
$ f\in C[0,1],\, x\in [0,1] \textrm{ and } n\geq j\geq 2.$

In 2018 Aldaz and Render considered a generalization of the classical Bernstein operators on the polynomial spaces, which reproduce ${\bf 1}$ and a polynomial $f_1$, strictly increasing on $(0,1)$. Denote by $B_n^{f_1}$ this Bernstein type operator. For $f\in C[0,1]$, $x\in [0,1]$ (see \cite[Remark following Th. 6.5]{AlRe2018})
$$| B_n^{f_1}(f;x)-f(x)|\leq (c_S+1)\omega_1\left(f, n^{-\frac{1}{2}}\right),   $$
where $c_S=\dfrac{4306+837\sqrt{6}}{5832}$, Sikkema's constant. The drawback here is that the latter is shown to be valid only for $n$ sufficiently large.

\vspace{2mm}

We will improve the estimates known so far in three different ways.

\begin{prop} \label{p2.1} Let $f\in C[0,1]$, $x\in [0,1]$, $n \geq j \geq 1$. Then
	\begin{equation} \label{D} |f(x)-B_{n,0,j}(f;x)|\leq 1\cdot\omega_2\left(f;\dfrac{1}{\sqrt{n}}\right)+\omega_1\left(f;\dfrac{j-1}{n}\right).   \end{equation}
	\end{prop}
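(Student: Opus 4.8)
The plan is to split the error into a ``drift'' part, governed by the first moment $B_{n,0,j}(e_1;x)-x$ and measured by $\omega_1$, and a ``dispersion'' part, governed by the second moment and measured by $\omega_2$. Writing $b_n(x):=B_{n,0,j}(e_1;x)=\sum_{k=0}^n t_{n,k}^j\,p_{n,k}(x)$ and introducing the auxiliary operator $\hat L_n f(x):=B_{n,0,j}(f;x)-f(b_n(x))+f(x)$, one checks at once that $\hat L_n$ reproduces $e_0$ and $e_1$ (using $B_{n,0,j}e_0=e_0$). The triangle inequality then gives
\begin{equation*}
|f(x)-B_{n,0,j}(f;x)|\le |f(x)-f(b_n(x))|+|\hat L_n f(x)-f(x)|,
\end{equation*}
and since $\hat L_n f(x)-f(x)=B_{n,0,j}(f;x)-f(b_n(x))$, the first summand is at most $\omega_1\bigl(f;|b_n(x)-x|\bigr)$.

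The heart of the matter is a sharp two-sided estimate for the nodes, namely $\frac{k-j+1}{n}\le t_{n,k}^j\le \frac{k}{n}$ for $0\le k\le n$. I would prove both inequalities factor by factor after writing $(t_{n,k}^j)^j=\prod_{i=0}^{j-1}\frac{k-i}{n-i}$: the upper bound follows since $\frac{k-i}{n-i}\le\frac kn$ is equivalent to $k\le n$, and the lower bound since $\frac{k-i}{n-i}\ge\frac{k-j+1}{n}$ reduces to $n(j-1-i)+i(k-j+1)\ge 0$, valid for $0\le i\le j-1$ and $k\ge j-1$ (the cases $k<j$ being trivial as $t_{n,k}^j=0$ there). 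Consequently $0\le \frac kn-t_{n,k}^j\le\frac{j-1}{n}$ for every $k$, and since $x-b_n(x)=\sum_k\bigl(\frac kn-t_{n,k}^j\bigr)p_{n,k}(x)$ is a convex combination of these quantities, we obtain $0\le x-b_n(x)\le\frac{j-1}{n}$, which supplies the $\omega_1\bigl(f;\frac{j-1}{n}\bigr)$ term.

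For the second summand I would invoke the standard sharp estimate for positive linear operators reproducing $e_0$ and $e_1$, bounding $|\hat L_n f(x)-f(x)|$ by $\omega_2(f;h)$ with $h$ tied to the second moment $\hat L_n((\cdot-x)^2;x)=B_{n,0,j}((\cdot-x)^2;x)-(b_n(x)-x)^2$. Writing $t_{n,k}^j=\frac kn-\theta_{n,k}$ with $0\le\theta_{n,k}\le\frac{j-1}{n}$ from the node estimate, and combining the classical identity $\sum_k(\frac kn-x)^2 p_{n,k}(x)=\frac{x(1-x)}{n}$ with the Cauchy--Schwarz inequality for the cross term, one controls this moment and checks that the scale $h=\frac{1}{\sqrt n}$ is admissible.

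The main obstacle is twofold. First, locating and verifying the pointwise node bounds $\frac{k-j+1}{n}\le t_{n,k}^j\le\frac kn$ is the decisive structural step, as the entire $\omega_1$ contribution rests on it. Second, and more delicate, is securing the clean unit constant on $\omega_2\bigl(f;\frac{1}{\sqrt n}\bigr)$: this requires the sharp form of the second-order estimate for affine-reproducing operators rather than a crude $K$-functional bound (whose constant exceeds $1$), together with a careful verification that the $j$-dependent correction terms in the second moment are absorbed once the argument is enlarged from its natural scale $\sim\frac{1}{2\sqrt n}$ to $\frac{1}{\sqrt n}$.
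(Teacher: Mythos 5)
Your decomposition is genuinely different from the paper's, and the difference is exactly where the trouble lies. The paper simply writes $|f(x)-B_{n,0,j}(f;x)|\le |f(x)-B_n(f;x)|+|B_n(f;x)-B_{n,0,j}(f;x)|$ with $B_n$ the \emph{classical} Bernstein operator: the first term is $\le \omega_2\left(f;\tfrac{1}{\sqrt n}\right)$ by P\u alt\u anea's optimal-constant theorem for $B_n$, and the second equals $\left|\sum_k\left[f\left(\tfrac kn\right)-f\left(t_{n,k}^j\right)\right]p_{n,k}(x)\right|\le\omega_1\left(f;\tfrac{j-1}{n}\right)$ by the Acu--Ra\c sa node estimate $0\le \tfrac kn-t_{n,k}^j\le\tfrac{j-1}{n}$ --- which, incidentally, you reprove correctly and self-containedly; your $\omega_1$ contribution (via $|f(x)-f(b_n(x))|$ and $0\le x-b_n(x)\le\tfrac{j-1}{n}$) is sound.

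The gap is in your $\omega_2$ term. First, $\hat L_n=B_{n,0,j}-\delta_{b_n(x)}+\delta_x$ is not a positive operator, so ``the standard sharp estimate for positive linear operators reproducing $e_0$ and $e_1$'' does not apply to it; what you can legitimately estimate is $|B_{n,0,j}(f;x)-f(b_n(x))|$ using the positive functional $B_{n,0,j}(\cdot\,;x)$ centered at its barycenter $b_n(x)$. Second, and decisively, there is no general constant-$1$ theorem of the form $|F(f)-f(F(e_1))|\le\omega_2\left(f;\sqrt{\mu_2}\right)$ for positive linear functionals: the general P\u alt\u anea bound carries the factor $1+\tfrac{\mu_2}{2h^2}$, and the constant $1$ at scale $h=\tfrac{1}{\sqrt n}$ is a deep result specific to the classical Bernstein operator --- which your route discards. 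Third, even granting such a theorem, your own Cauchy--Schwarz computation gives at best $\sqrt{\mu_2}\le\sqrt{\tfrac{x(1-x)}{n}}+\tfrac{j-1}{n}\le\tfrac{1}{2\sqrt n}+\tfrac{j-1}{n}$, which exceeds $\tfrac{1}{\sqrt n}$ unless $n\ge 4(j-1)^2$, so the claimed range $n\ge j$ cannot be covered. Your closing paragraph flags this as the delicate point but does not resolve it; as it stands the recentering approach yields a constant strictly larger than $1$ (and a restricted range of $n$), whereas routing the comparison through $B_n$ is precisely what lets the paper inherit P\u alt\u anea's constant $1$.
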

\begin{proof} We have 
		\begin{align*}|f(x)-B_{n,0,j}(f;x)|\\
		& \leq |f(x)-B_n(f;x)|+|B_n(f;x)-B_{n,0,j}(f;x)|\\
		&\leq 1\cdot\omega_2\left(f;\dfrac{1}{\sqrt{n}}\right)+\omega_1\left(f;\dfrac{j-1}{n}\right).    \end{align*}
		The first summand is taken from P\u alt\u anea  \cite{Pal2003,Pal2004}, while the second was given by Acu and Ra\c sa \cite{AcRa2016}. For $j=1$ this reduces to a best possible result for classical Bernstein operators.
	\end{proof}
	
	There is a second way to prove a similar inequality. To this end we  use P\u alt\u anea's general theorem for positive linear operators reproducing constant functions (see \cite[Cor.2.2.1, p.31]{Pal2004}).
\begin{align*} 
& |B_{n,0,j}(f;x)-f(x)|\leq \dfrac{1}{h} |B_{n,0,j}(e_1-x;x)|\cdot\omega_1(f;h)\\
&+\left(1+\dfrac{1}{2h^2}B_{n,0,j}\left((e_1-x)^2;x\right)\right)\cdot\omega_2(f;h),\,\, 0<h\leq\frac12.
\end{align*}

\begin{prop}\label{p2.2} Let $f\in C[0,1]$, $x\in [0,1]$, $1 \leq j \leq n$.  Then
	\begin{itemize}
		\item[a)] $|B_{n,0,2}(f;x)\!-\!f(x)|\!\leq\! \dfrac{1}{\sqrt{n-1}} d_n(x)\omega_1\left(f;\dfrac{1}{\sqrt{n\!-\!1}}\right)\!+\!\left(1\!+\!xd_n(x)\right)\omega_2\left(f;\dfrac{1}{\sqrt{n\!-\!1}}\right)$, $n\geq 5$, where $d_n(x):=(1-x)\left[1-(1-x)^{n-1}\right]$;
		\item[b)] $|B_{n,0,j}(f;x)-f(x)|\leq (j-1)\dfrac{1}{\sqrt{n}}\omega_1\left(f;\dfrac{1}{\sqrt{n}}\right)+\left(\dfrac{1}{8}+j\right)\omega_2\left(f;\dfrac{1}{\sqrt{n}}\right),\, n\geq 4. $
	\end{itemize}
\end{prop}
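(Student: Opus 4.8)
The plan is to apply P\u alt\u anea's inequality (the two-line bound displayed just before the statement) to $B_{n,0,j}$, so that the whole problem reduces to estimating the first central moment $B_{n,0,j}(e_1-x;x)$ and the second central moment $B_{n,0,j}((e_1-x)^2;x)$, and then inserting the right value of $h$. Since $t_{n,k}^j\le \tfrac{k}{n}$ for every $k$ and $B_{n,0,j}(e_0)=1$, one gets $B_{n,0,j}(e_1;x)=\sum_k t_{n,k}^j p_{n,k}(x)\le \sum_k \tfrac{k}{n}p_{n,k}(x)=x$, so the bias is nonnegative and $|B_{n,0,j}(e_1-x;x)|=x-B_{n,0,j}(e_1;x)$ throughout; only upper bounds for the two moments are needed. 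In part a) I take $h=1/\sqrt{n-1}$ and in part b) $h=1/\sqrt{n}$, and in both cases the admissibility condition $0<h\le\tfrac12$ of P\u alt\u anea's theorem is exactly what forces the thresholds $n\ge 5$ and $n\ge 4$, respectively.

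For part a) ($j=2$) I would exploit that $B_{n,0,2}$ reproduces $e_0$ and $e_2$, which gives the exact identity
\[ B_{n,0,2}((e_1-x)^2;x)=B_{n,0,2}(e_2;x)-2x\,B_{n,0,2}(e_1;x)+x^2\,B_{n,0,2}(e_0;x)=2x\bigl(x-B_{n,0,2}(e_1;x)\bigr). \]
Thus the second moment is governed by the bias alone. The sharp input I would invoke is the one underlying C\'ardenas-Morales et al. \cite{CGR2012} (the $\omega_1$-estimate quoted above), namely $x-B_{n,0,2}(e_1;x)\le \tfrac{d_n(x)}{n-1}$, equivalently $B_{n,0,2}((e_1-x)^2;x)\le \tfrac{2x\,d_n(x)}{n-1}$, with $d_n(x)=(1-x)[1-(1-x)^{n-1}]$. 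Feeding these into P\u alt\u anea's inequality with $h=1/\sqrt{n-1}$ yields the coefficient $\sqrt{n-1}\cdot\tfrac{d_n(x)}{n-1}=\tfrac{d_n(x)}{\sqrt{n-1}}$ in front of $\omega_1$ and $1+\tfrac{n-1}{2}\cdot\tfrac{2x\,d_n(x)}{n-1}=1+x\,d_n(x)$ in front of $\omega_2$, which is precisely the asserted inequality.

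For part b) the decisive tool is the node estimate of Acu and Ra\c sa \cite{AcRa2016} already behind the second summand of Proposition \ref{p2.1}, namely $0\le \tfrac{k}{n}-t_{n,k}^j\le \tfrac{j-1}{n}$ for all $k$. Summing $x-t_{n,k}^j=(\tfrac{k}{n}-t_{n,k}^j)+(x-\tfrac{k}{n})$ against $p_{n,k}(x)$ and using $B_n(e_1)=e_1$ gives $0\le x-B_{n,0,j}(e_1;x)\le \tfrac{j-1}{n}$, so the $\omega_1$-coefficient is at most $\sqrt{n}\cdot\tfrac{j-1}{n}=\tfrac{j-1}{\sqrt{n}}$. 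For the second moment I would use Minkowski's inequality for the weights $p_{n,k}(x)$ applied to $t_{n,k}^j-x=(t_{n,k}^j-\tfrac{k}{n})+(\tfrac{k}{n}-x)$, obtaining $\sqrt{B_{n,0,j}((e_1-x)^2;x)}\le \tfrac{j-1}{n}+\sqrt{B_n((e_1-x)^2;x)}=\tfrac{j-1}{n}+\sqrt{\tfrac{x(1-x)}{n}}\le \tfrac{j-1}{n}+\tfrac{1}{2\sqrt{n}}$. Squaring, multiplying by $\tfrac{n}{2}$ and adding $1$ produces $\tfrac{9}{8}+\tfrac{(j-1)^2}{2n}+\tfrac{j-1}{2\sqrt{n}}$, and it remains to check this is $\le \tfrac{1}{8}+j$, i.e. $\tfrac{(j-1)^2}{2n}+\tfrac{j-1}{2\sqrt{n}}\le j-1$; for $j>1$ this reduces to $\tfrac{j-1}{2n}+\tfrac{1}{2\sqrt{n}}\le 1$, which holds for every $1\le j\le n$ as soon as $n\ge 4$ (then $\tfrac{j-1}{2n}<\tfrac12$ and $\tfrac{1}{2\sqrt{n}}\le\tfrac14$), the case $j=1$ being an equality.

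The main obstacle lies entirely in the moment bounds, not in the assembly. In part a) it is the sharp, $j=2$-specific estimate $x-B_{n,0,2}(e_1;x)\le \tfrac{d_n(x)}{n-1}$, whose proof must handle the square-root nonlinearity of $t_{n,k}^2=\sqrt{k(k-1)/(n(n-1))}$ and which I would quote from \cite{CGR2012}; in part b) it is the uniform node estimate $\tfrac{k}{n}-t_{n,k}^j\le \tfrac{j-1}{n}$, which I would take from \cite{AcRa2016}. Once these are available, both inequalities follow by the bookkeeping above, and the thresholds $n\ge 5$ and $n\ge 4$ are dictated by the constraint $h\le\tfrac12$.
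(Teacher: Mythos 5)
Your proposal is correct, and part a) coincides with the paper's proof step for step: the identity $B_{n,0,2}((e_1-x)^2;x)=2x\bigl(x-B_{n,0,2}(e_1;x)\bigr)$ from the reproduction of $e_0,e_2$, the bias bound $x-B_{n,0,2}(e_1;x)\le d_n(x)/(n-1)$ quoted from C\'ardenas-Morales et al., and $h=1/\sqrt{n-1}$. In part b) you also share the skeleton (P\u alt\u anea's inequality plus the Acu--Ra\c sa node estimate $0\le \tfrac{k}{n}-t_{n,k}^j\le\tfrac{j-1}{n}$ for the first moment), but your treatment of the second moment genuinely differs. The paper expands $B_{n,0,j}((e_1-x)^2;x)\le B_{n,0,j}(e_2;x)-x^2+2x\tfrac{j-1}{n}$ and then shows $B_{n,0,j}(e_2;x)\le B_n(e_2;x)$ via the factorization $(t_{n,k}^j)^2-(k/n)^2=(t_{n,k}^j-k/n)(t_{n,k}^j+k/n)\le 0$, arriving at the pointwise bound $\tfrac{x(1-x)}{n}+\tfrac{2x(j-1)}{n}$, whose supremum over $x$ gives (after the correction $\tfrac14+2(j-1)$ in place of the displayed $\tfrac14+2j$) exactly the constant $\tfrac18+j$. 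You instead apply Minkowski's inequality in the weighted $\ell^2$-space with weights $p_{n,k}(x)$, getting $\sqrt{B_{n,0,j}((e_1-x)^2;x)}\le \tfrac{j-1}{n}+\tfrac{1}{2\sqrt n}$; this is asymptotically sharper than the paper's global bound (the excess over $\tfrac{1}{4n}$ decays with $n$ rather than staying at $\tfrac{2(j-1)}{n}$), and your closing verification that $\tfrac{(j-1)^2}{2n}+\tfrac{j-1}{2\sqrt n}\le j-1$ for $j\le n$, $n\ge 4$ is correct. You also make explicit what the paper leaves implicit, namely that the thresholds $n\ge5$ and $n\ge4$ come from the admissibility condition $h\le\tfrac12$ in P\u alt\u anea's theorem. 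The trade-off: the paper's route keeps a pointwise (in $x$) second-moment bound before passing to constants, while yours sacrifices pointwise information immediately but wins a cleaner and slightly stronger uniform estimate.
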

\begin{proof}
	a) In order to use P\u alt\u anea's result we  estimate $B_{n,0,2}(e_1-x;x)$ and\linebreak  $B_{n,0,2}((e_1-x)^2;x)$.
	
		According to \cite[(3.7)]{CGR2012} there holds
\begin{align*}|B_{n,0,2}(e_1-x;x)|&= |B_{n,0,2}(e_1;x)-x|\\
&\leq \dfrac{1-x}{n-1}\left(1-(1-x)^{n-1}\right) =\dfrac{d_n(x)}{n-1}.\end{align*}
	Furthermore, 
	\begin{align*} 0\leq B_{n,0,2}\left((e_1-x)^2;x\right)&=2x\left(x-B_{n,0,2}(e_1;x)\right)\\
	&\leq \dfrac{2xd_n(x)}{n-1}. \end{align*}
	Taking $h=\dfrac{1}{\sqrt{n-1}}$, we get the estimation a).
	
	In order to illustrate the nonsymmetry of the situation, in Fig.\ref{fig:1} are the graphs of the function $d_n(x)$ for $n=5$ and $n=10$, $x\in[0,1]$.
	
	\begin{figure}[htbp]
		\centering
		\includegraphics[height=70mm,keepaspectratio]{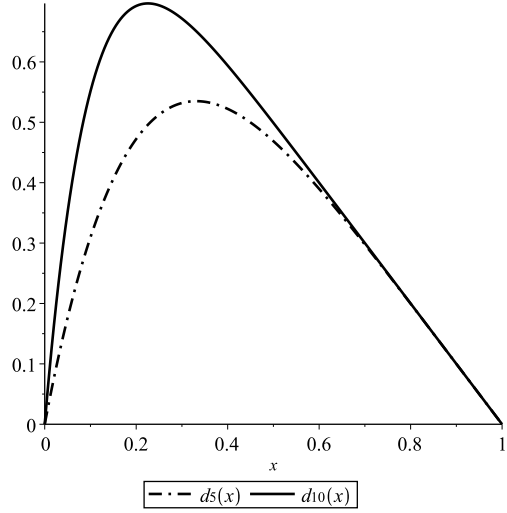}
		\caption{$d_n(x)$ for $n=5$ and $n=10$, $x\in [0,1]$} \label{fig:1}
	\end{figure}

	b) One has to determine global estimates for $|B_{n,0,j}(e_1-x;x)|$ and $B_{n,0,j}\left((e_1-x)^2;x\right)$, using again  $B_{n,0,j}(e_0;x)=1$.
	We have 
	\begin{align*}
	B_{n,0,j}(e_1-x;x)=\displaystyle\sum_{k=0}^n\left[\left(\dfrac{k}{n}\dots\dfrac{k-j+1}{n-j+1}\right)^{1/j}-\dfrac{k}{n}\right]p_{n,k}(x).
	\end{align*}
	The result from \cite[Section 4.7]{AcRa2016}, namely
\begin{equation*}\label{eq.rasa_ana}
-\dfrac{j-1}{n}\leq \left(\dfrac{k}{n}\dots\dfrac{k-j+1}{n-j+1}\right)^{1/j}-\dfrac{k}{n}\leq 0, \end{equation*}
	yields
	\begin{equation}\label{A}|B_{n,0,j}(e_1-x;x)|\leq \dfrac{j-1}{n}.\end{equation}
	 The second moment can be written as
	 \begin{align}\label{B}
	0&\leq  B_{n,0,j}\left((e_1-x)^2;x\right)\nonumber\\
	&=B_{n,0,j}(e_2;x)-x^2-2x\left[B_{n,0,j}(e_1;x)-x\right]\nonumber\\
	&\leq B_{n,0,j}(e_2;x)-x^2+2x\dfrac{j-1}{n}, 
	 \end{align}
	 where we used (\ref{A}).
	 
	\noindent We have
	 \begin{align*}
	& B_{n,0,j}(e_2;x)-x^2 \\
	&= B_{n,0,j}(e_2;x)-B_n(e_2;x)+B_n(e_2;x)-x^2\\
	 &=\displaystyle\sum_{k=0}^n\left[\left(\dfrac{k}{n}\cdots\dfrac{k-j+1}{n-j+1}\right)^{\frac{2}{j}}-\left(\dfrac{k}{n}\right)^2\right]p_{n,k}(x)+\dfrac{x(1-x)}{n}\\
	 &=\displaystyle\sum_{k=0}^n\left[\left(\dfrac{k}{n}\cdots\dfrac{k-j+1}{n-j+1}\right)^{\frac{1}{j}}-\dfrac{k}{n}\right]\left[\left(\dfrac{k}{n}\cdots\dfrac{k-j+1}{n-j+1}\right)^{\frac{1}{j}}+\dfrac{k}{n}\right]p_{n,k}(x)+\dfrac{x(1-x)}{n}\\
	 &\leq \dfrac{x(1-x)}{n}.
	 \end{align*}
	 Together with (\ref{B}) this yields
	 $$ 0\leq B_{n,0,j}\left((e_1-x)^2;x\right)\leq \dfrac{x(1-x)}{n}+2x\dfrac{j-1}{n}.  $$
	 	 So we arrive at
	 \begin{align}\label{C}
	 |B_{n,0,j}(f;x)-f(x)|&\leq \dfrac{j-1}{n}\dfrac{1}{h}\omega_1(f;h)+\left\{1+\dfrac{1}{2h^2}\left[\dfrac{x(1-x)}{n}+2x\dfrac{j-1}{n}\right]\right\}\omega_2(f;h)\nonumber\\
	 &\leq \dfrac{j-1}{n}\dfrac{1}{h}\omega_1(f;h)+\left\{1+\dfrac{1}{2nh^2}\left(\dfrac{1}{4}+2j\right)\right\}\omega_2(f;h).
	 \end{align}
	 Let $h=\dfrac{1}{\sqrt{n}}$, $n\geq 4$. This leads to our proposition.

	\end{proof}

\begin{remark}
	a) The right hand side of  inequality (\ref{C}) is not a pointwise one and so does not express interpolation at the endpoints. There is room for improvement.
	
	b) If one compares the estimate from Proposition \ref{p2.2} b) and the estimate from Proposition \ref{p2.1} it is clear that $\omega_2\left(f;\dfrac{1}{\sqrt{n}}\right)$ is always the worse (i.e., dominant) term. In case $f\in C^2[0,1]$ both inequalities give ${\mathcal O}\left(\dfrac{1}{n}\right)$. This cannot be reached by any of the previous inequalities in terms of $\omega_1$ only .
\end{remark}

	The inequalities (\ref{D}) and those from Proposition \ref{p2.2} do not imply that the function $e_j$ is reproduced by $B_{n,0,j}$ if $j\geq 2$. In order to achieve this we follow an idea of Marsden and Schoenberg  \cite [p.82] {Schoenberg} (see also \cite[Corollary 3.4]{ANTA}). Below we will use a variation of an analogue of  their $\omega_1^*(f;\delta)$, namely $ \omega_j^*(f;\delta)$ given by
	
	$$ \omega_j^*(f;\delta):=\displaystyle\inf_{l}\left\{\omega_1(f-l;\delta)\right\},\textrm{ where } l(x)=ae_j,\,\, a \in \mathbb{R}. $$
	Note that we won't loose anything in doing so!

\begin{prop} \label{p2.3}
For $f\in C[0,1]$, $x\in [0,1]$ and $n \geq j$ we have
\begin{align*}
	|f(x)-B_{n,0,j}(f;x)|\leq \displaystyle\inf_{l}\left\{\omega_2\left(f-l;\dfrac{1}{\sqrt{n}}\right)+\omega_1\left(f-l;\dfrac{j-1}{n}\right)\right\}.
	\end{align*}
\end{prop}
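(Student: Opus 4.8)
The plan is to exploit the one defining feature of $B_{n,0,j}$ that Propositions \ref{p2.1} and \ref{p2.2} did not use, namely that it reproduces $e_j$ (this is precisely where the hypothesis $n\geq j$ is needed, so that $e_j$ is genuinely a fixed point). First I would fix an arbitrary admissible $l=a\,e_j$ with $a\in\mathbb{R}$. Since $B_{n,0,j}$ is linear and fixes $e_j$, it fixes every such $l$, i.e. $B_{n,0,j}(l;x)=l(x)$. Consequently the error operator $I-B_{n,0,j}$ annihilates $l$, and one obtains the key invariance
\[
 f(x)-B_{n,0,j}(f;x)=(f-l)(x)-B_{n,0,j}(f-l;x).
\]

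Next I would apply Proposition \ref{p2.1} verbatim to the function $g:=f-l\in C[0,1]$, which gives
\[
 \bigl|(f-l)(x)-B_{n,0,j}(f-l;x)\bigr|\leq \omega_2\!\left(f-l;\tfrac{1}{\sqrt{n}}\right)+\omega_1\!\left(f-l;\tfrac{j-1}{n}\right).
\]
Combining the two displays shows that the stated bound holds for every admissible $l$; taking the infimum over all $l=a\,e_j$ then yields the assertion. Choosing $l=0$ recovers Proposition \ref{p2.1}, which is exactly the sense in which ``we won't lose anything'' in passing to the $\omega_j^*$-type quantity.

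I do not expect a genuine obstacle here: the whole argument is the Marsden--Schoenberg trick of subtracting a reproduced function, and it reduces cleanly to the already-established Proposition \ref{p2.1}. The only point demanding care is the reproduction identity $B_{n,0,j}(a\,e_j;x)=a\,e_j(x)$, which must be invoked with the hypothesis $n\geq j$ (so that $e_j$ is a fixed point and not merely approximately reproduced); everything else is linearity.
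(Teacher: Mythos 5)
Your proposal is correct and is essentially identical to the paper's own argument: both use that $B_{n,0,j}$ reproduces $e_j$ (hence every $l=a\,e_j$) to write $f-B_{n,0,j}f=(f-l)-B_{n,0,j}(f-l)$, apply Proposition \ref{p2.1} to $f-l$, and take the infimum over $a\in\mathbb{R}$. No gaps.
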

\begin{proof}
Since $B_{n,0,j}$ reproduces $e_j$ we have
	\begin{align*}
	|f(x)-B_{n,0,j}(f;x)|&=\left|(f-l)(x)-B_{n,0,j}(f-l;x)\right|\\
	&\leq 1\cdot \omega_2\left(f-l;\dfrac{1}{\sqrt{n}}\right)+\omega_1\left(f-l;\dfrac{j-1}{n}\right).
	\end{align*}
	Since $l=ae_j$ was arbitrary, we pass to the $inf$ and get the result.
	
	Moreover, if $f$ is of the form $f(x)=a_0+a_j x^j,\, a_0,a_j\in {\mathbb R}$, we arrive at
	$$|f(x)-B_{n,0,j}(f;x)|=0.  $$
\end{proof}
	Analogous statements follow from the inequalities in Proposition \ref{p2.2}.

\begin{remark}

	Except for the papers previously cited we did not find any reference mentioning or using $\omega_l^*$ or modifications thereof.  The similarity to K-functionals is obvious. 
	
	In  \cite{ANTA}	it is mentioned that
	
$$ \omega_1^*(f;\delta) \leq c \cdot \omega_2(f;\sqrt \delta).$$

\end{remark}

\section{Iterates of $ B_{n,0,j}$}
Using the approach of Agratini and Rus (see \cite{AgRu2003}) the following result for the sequence of iterates $(B_{n,0,j}^m)_{m\geq 1}$ is obtained.
\begin{theorem} For $n\geq j$ fixed and $m\geq 1$, the iterates sequence $(B_{n,0,j}^m)_{m\geq 1}$ satifies $$\displaystyle\lim_{m\to\infty}B_{n,0,j}^m(f;x)=f(0)+\left[f(1)-f(0)\right]e_j(x).$$
	\end{theorem}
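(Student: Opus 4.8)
The plan is to realize $B_{n,0,j}$ (with $n\ge j$ fixed) as a contraction on a suitable complete metric space and then invoke the Banach fixed point theorem, exactly in the spirit of Agratini and Rus. First I would record two elementary facts about the nodes $t_{n,k}^j$: since the falling factorial $k(k-1)\cdots(k-j+1)$ vanishes precisely for $k\in\{0,1,\dots,j-1\}$ and equals its denominator only for $k=n$, one has $t_{n,k}^j=0$ for $0\le k\le j-1$, $t_{n,n}^j=1$, and $t_{n,k}^j\in(0,1)$ otherwise. Combined with $p_{n,k}(0)=\delta_{k,0}$ and $p_{n,k}(1)=\delta_{k,n}$ this gives the endpoint interpolation $B_{n,0,j}(f;0)=f(0)$ and $B_{n,0,j}(f;1)=f(1)$; in particular these two values are frozen under iteration.

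For fixed boundary data $\alpha,\beta$ let $X_{\alpha\beta}:=\{g\in C[0,1]: g(0)=\alpha,\ g(1)=\beta\}$, a closed (hence complete) subset of $(C[0,1],\|\cdot\|_\infty)$. By the endpoint interpolation, $B_{n,0,j}$ maps $X_{f(0),f(1)}$ into itself, and the decisive step is to show that it is a contraction there. Given $g_1,g_2\in X_{\alpha\beta}$, the difference $g_1-g_2$ vanishes at $0$ and $1$, so all basis terms whose node is $0$ or $1$ drop out and
\begin{align*}
|B_{n,0,j}(g_1;x)-B_{n,0,j}(g_2;x)| &=\Bigl|\sum_{k=j}^{n-1}(g_1-g_2)(t_{n,k}^j)\,p_{n,k}(x)\Bigr|\\
&\le \|g_1-g_2\|_\infty\sum_{k=j}^{n-1}p_{n,k}(x).
\end{align*}
Hence, writing $\Phi(x):=\sum_{k=j}^{n-1}p_{n,k}(x)=1-\sum_{k=0}^{j-1}p_{n,k}(x)-p_{n,n}(x)$, the operator is Lipschitz on $X_{\alpha\beta}$ with constant $q:=\max_{x\in[0,1]}\Phi(x)$.

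It remains to argue $q<1$, which I expect to be the only genuinely delicate point. On $(0,1)$ one has $p_{n,0}(x)=(1-x)^n>0$, so $\Phi(x)<1$ there, while $\Phi(0)=\Phi(1)=0$; since $\Phi$ is continuous on the compact interval $[0,1]$ its maximum is attained and is therefore strictly less than $1$ (in the degenerate case $n=j$ the sum is empty, $\Phi\equiv0$, and $B_{j,0,j}$ already coincides with the limit operator). With $0\le q<1$ the Banach fixed point theorem yields a unique fixed point $g^\ast$ of $B_{n,0,j}$ in $X_{f(0),f(1)}$, to which the iterates converge in sup norm, hence pointwise. Finally I would identify $g^\ast$: the function $f(0)e_0+[f(1)-f(0)]e_j$ lies in $X_{f(0),f(1)}$ and, being a linear combination of the reproduced functions $e_0$ and $e_j$, is fixed by $B_{n,0,j}$; by uniqueness it must equal $g^\ast$, which is exactly the claimed limit. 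For $j=1$ this recovers the classical Kelisky--Rivlin limit $f(0)+[f(1)-f(0)]x$.
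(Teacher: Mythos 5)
Your proposal is correct and follows essentially the same route as the paper: restricting to the complete metric space $X_{\alpha,\beta}$ of functions with prescribed endpoint values, showing $B_{n,0,j}$ is a contraction there, and identifying the fixed point $f(0)e_0+[f(1)-f(0)]e_j$ via the Banach fixed point theorem. The only (harmless) difference is that the paper produces the explicit contraction constant $1-2^{-(n-1)}$ from $p_{n,0}(x)+p_{n,n}(x)\ge 2^{-(n-1)}$, whereas you obtain $q<1$ by a soft compactness argument.
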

\begin{proof}
Denote
$$ X_{\alpha,\beta}:=\left\{f\in C[0,1]\,\,\left|\,\, f(0)=\alpha,\,f(1)=\beta \right.\right\}, \,\, (\alpha,\beta)\in {\mathbb R}\times {\mathbb \R}. $$
The operators $B_{n,0,j}$ interpolate $0$ and $1$:
\begin{align*}
&B_{n,0,j}(f;0)=f(t_{n,0}^j)p_{n,0}(x)=f(0),\\
&B_{n,0,j}(f;1)=f(t_{n,n}^j)p_{n,0}(x)=f(1).
\end{align*}
Let $f,g\in X_{\alpha,\beta}$. Then, for $x\in [0,1]$,
\begin{align*}
|B_{n,0,j}(f;x)-B_{n,0,j}(g;x)|&=\left|\displaystyle\sum_{k=1}^{n-1}[f(t_{n,k})-g(t_{n,k})]p_{n,k}(x)\right|\\
&\leq \left(1-p_{n,0}(x)-p_{n,n}(x)\right)\cdot \|f-g\|_{\infty}\\
&\leq\left(1-\dfrac{1}{2^{n-1}}\right)\|f-g\|_{\infty}.\
\end{align*}
Therefore, $B_{n,0,j}|_{X_{\alpha,\beta}}:X_{\alpha,\beta}\to X_{\alpha,\beta} $ is a contraction for every $(\alpha, \beta)\in {\mathbb R}\times {\mathbb R}$ and $n\geq j$ fixed.

The function $p_{\alpha,\beta}^*:=f(0)+\left(f(1)-f(0)\right)e_j$ belongs to $X_{f(0),f(1)}$ and is a fixed point of $B_{n,0,j}$.

By contradiction we obtain the claim.

\end{proof}

The above result was also obtained by Gavrea and Ivan as an example for a  general theorem concerning the limit of the iterates of positive linear operators (see \cite{GaIv2011}). No corresponding quantitative version is known to us.

\section{Voronovskaya result for the Bernstein-type operators $B_{n,0,j}$}
 The asymptotic formula of the operator $B_{n,0,j}$ was stated as a conjecture by C\'ardenas-Morales et al. \cite{CGR2012} and proved by Birou in \cite{Bir2017}. 
 
 \begin{theorem}(see \cite{CGR2012},\cite{Bir2017} ) For all $f\in C[0,1]$, $x\in (0,1)$ and $j\geq 1$, whenever $f^{\prime\prime}(x)$ exists,
 	\begin{equation}
 	\label{e2v} \displaystyle\lim_{n\to\infty} n(B_{n,0,j}(f;x)-f(x))=\dfrac{x(1-x)}{2}f^{\prime\prime}(x)-\dfrac{(j-1)(1-x)}{2}f^{\prime}(x).
 	\end{equation}
 	\end{theorem}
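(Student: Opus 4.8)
The plan is to follow the standard moment-based route to a Voronovskaya formula and to reduce everything to the asymptotic behaviour of the first two central moments of $B_{n,0,j}$. I fix $x\in(0,1)$ at which $f''(x)$ exists and write the second-order Taylor expansion with Peano remainder,
$$ f(t)=f(x)+f'(x)(t-x)+\tfrac12 f''(x)(t-x)^2+\varphi(t)(t-x)^2, $$
where $\varphi$ is bounded on $[0,1]$ and $\varphi(t)\to 0$ as $t\to x$. Applying $B_{n,0,j}$ and using $B_{n,0,j}(e_0;x)=1$ gives
$$ B_{n,0,j}(f;x)-f(x)=f'(x)\,B_{n,0,j}(e_1-x;x)+\tfrac12 f''(x)\,B_{n,0,j}\big((e_1-x)^2;x\big)+R_n, $$
with $R_n=B_{n,0,j}\big(\varphi\cdot(e_1-x)^2;x\big)$. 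The theorem then follows once I establish
$$ n\,B_{n,0,j}(e_1-x;x)\to-\tfrac{(j-1)(1-x)}{2},\qquad n\,B_{n,0,j}\big((e_1-x)^2;x\big)\to x(1-x), $$
together with $nR_n\to 0$.

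The heart of the argument is a one-term refinement of the bound $-\frac{j-1}{n}\le t_{n,k}^j-\frac{k}{n}\le 0$ already used in the proof of Proposition \ref{p2.2}. Writing $t_{n,k}^j=\frac{k}{n}\big[\prod_{i=0}^{j-1}\frac{1-i/k}{1-i/n}\big]^{1/j}$ for $k\ge j$ and expanding the logarithm, I would obtain the expansion
$$ t_{n,k}^j=\frac{k}{n}-\frac{j-1}{2n}\Big(1-\frac{k}{n}\Big)+O\!\big(n^{-2}\big), $$
uniform over indices with $|k/n-x|<\delta$; the remaining indices (in particular the boundary ones $k<j$, where $t_{n,k}^j=0$) contribute only exponentially small amounts because $p_{n,k}(x)$ decays away from $k\approx nx$. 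Summing against $p_{n,k}(x)$ and using $\sum_k(1-\frac{k}{n})p_{n,k}(x)=1-x$ yields the first limit directly. For the second moment I would feed the same expansion into the exact representation of $B_{n,0,j}(e_2;x)-x^2$ derived in the excerpt, which gives $n\big(B_{n,0,j}(e_2;x)-x^2\big)\to(2-j)x(1-x)$ via $\sum_k\frac{k}{n}(1-\frac{k}{n})p_{n,k}(x)=x(1-x)-\frac{x(1-x)}{n}$; combining this with the first-moment limit through $B_{n,0,j}((e_1-x)^2;x)=\big(B_{n,0,j}(e_2;x)-x^2\big)-2x\,B_{n,0,j}(e_1-x;x)$ produces the value $x(1-x)$, in which the two competing $(j-1)x(1-x)$ contributions cancel.

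For the remainder I would split $R_n$ according to $|t_{n,k}^j-x|<\eta$ and $\ge\eta$. On the near part I bound $|\varphi|$ by a small $\varepsilon$ and use $n\,B_{n,0,j}((e_1-x)^2;x)=O(1)$; on the far part I use boundedness of $\varphi$ together with the Chebyshev-type estimate
$$ n\!\!\sum_{|t_{n,k}^j-x|\ge\eta}\!\!(t_{n,k}^j-x)^2p_{n,k}(x)\le \eta^{-2}\,n\,B_{n,0,j}\big((e_1-x)^4;x\big), $$
so it suffices to record that the fourth central moment is $O(n^{-2})$ — which follows from its classical Bernstein counterpart since $t_{n,k}^j$ differs from $k/n$ only by $O(1/n)$. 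Letting first $n\to\infty$ and then $\varepsilon\to0$ gives $nR_n\to0$. The hard part will be justifying the one-term expansion of the nonlinear $j$-th-root expression $t_{n,k}^j$ uniformly in the relevant range and cleanly disposing of the boundary and small-$k$ tail; once that uniform asymptotic is secured, every limit collapses to a standard moment identity for the classical Bernstein operator.
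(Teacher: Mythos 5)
The paper does not actually prove this theorem: it is quoted with attribution, the conjecture being due to C\'ardenas-Morales, Garrancho and Ra\c sa \cite{CGR2012} and the proof to Birou \cite{Bir2017}, so there is no internal argument to compare yours against. On its own merits your proposal is correct and is essentially the standard moment-based Voronovskaya scheme that Birou's resolution of the conjecture also rests on. The two limits you isolate are the right ones and your values check out: from $t_{n,k}^j=\frac{k}{n}\bigl[\prod_{i=0}^{j-1}\frac{1-i/k}{1-i/n}\bigr]^{1/j}$ one gets $t_{n,k}^j-\frac{k}{n}=-\frac{j-1}{2n}\bigl(1-\frac{k}{n}\bigr)+O(k^{-2})$, whence $nB_{n,0,j}(e_1-x;x)\to-\frac{(j-1)(1-x)}{2}$, and the factorization $(t_{n,k}^j)^2-(k/n)^2=(t_{n,k}^j-k/n)(t_{n,k}^j+k/n)$ gives $n(B_{n,0,j}(e_2;x)-x^2)\to(2-j)x(1-x)$, so that the $(j-1)x(1-x)$ terms indeed cancel to leave $nB_{n,0,j}((e_1-x)^2;x)\to x(1-x)$; inserted into the Peano--Taylor identity this reproduces (\ref{e2v}) exactly. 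The step you flag as "the hard part" is in fact routine given material already in the paper: the Acu--Ra\c sa inequality $-\frac{j-1}{n}\le t_{n,k}^j-\frac{k}{n}\le 0$ holds for \emph{all} $k$ (including $k<j$, where $t_{n,k}^j=0$), so the indices with $|k/n-x|\ge\delta$ contribute at most $(j-1)\sum_{|k/n-x|\ge\delta}p_{n,k}(x)=O(n^{-1})$ to $n$ times the first moment by Chebyshev's inequality, while on $|k/n-x|<\delta$ with $\delta<x$ the error $O(k^{-2})=O(n^{-2})$ is uniform; the same inequality also yields $B_{n,0,j}((e_1-x)^4;x)\le 8B_n((e_1-x)^4;x)+O(n^{-4})=O(n^{-2})$ for your remainder estimate. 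So the proof closes; the only caveat is that your claimed uniformity "over indices with $|k/n-x|<\delta$" should be stated with $\delta<x$ (it fails near $k=j$), which is exactly what the tail splitting is for.
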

 
 A quantitative {\bf pre-Voronovskaya theorem} for the Bernstein type operator  $B_{n,0,j}$ was proved by Finta \cite{Finta1} using the first order Ditzian-Totik modulus of smoothness
 defined by
 $$ \omega_{\varphi}^1(f;\delta)=\displaystyle\sup_{0<h\leq \delta}\sup_{x\pm \frac{1}{2}h\varphi(x)}\left|f\left(x+\frac{1}{2}h\varphi(x)\right)-f\left(x-\frac{1}{2}h\varphi(x)\right)\right|, $$
 where $\varphi(x)=\sqrt{x(1-x)}$, $x\in[0,1]$.
 We call this "pre-Voronovskaya" since it appears to point into a good direction but the limit is not explicitly given.
 \begin{theorem}
 \cite{Finta1} There exists $C>0$ depending only on $j$ such that
 $$ \left|n(B_{n,0,j}(f;x)-f(x))+f^{\prime}(x)nB_{n,0,j}(xe_0-e_1;x)-\frac{1}{2} f^{\prime\prime}(x)nB_{n,0,j}((e_1-xe_0)^2;x)\right|$$
 $$\leq C\omega_{\varphi}^1\left(f^{\prime\prime};\dfrac{1}{\sqrt{n}}\right), $$
 for all $x\in [0,1]$, $f\in C^2[0,1]$ and $n\geq j \geq 1$.
 \end{theorem}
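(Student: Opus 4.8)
The plan is to collapse the whole bracket into a single remainder term and then estimate it by the Ditzian--Totik technique. First I would expand $f$ about $x$ with integral remainder, $f(t)=f(x)+f'(x)(t-x)+\frac12 f''(x)(t-x)^2+R_x(t)$, where $R_x(t)=\int_x^t (t-u)\bigl[f''(u)-f''(x)\bigr]\,du$. Applying $B_{n,0,j}(\cdot\,;x)$ and using $B_{n,0,j}(e_0;x)=1$ together with the identity $xe_0-e_1=-(e_1-xe_0)$, the two correction terms written explicitly in the statement cancel precisely against the first and second moments generated by the expansion. Hence the quantity inside the absolute value equals $n\,B_{n,0,j}(R_x;x)$, and the theorem is equivalent to the single estimate $\bigl|n\,B_{n,0,j}(R_x;x)\bigr|\le C\,\omega_\varphi^1\!\left(f'';n^{-1/2}\right)$, so everything reduces to controlling $R_x$ against the weighted modulus.

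Next I would replace $\omega_\varphi^1$ by the equivalent $K$-functional $K_\varphi(f'';\delta)=\inf_g\{\|f''-g\|_\infty+\delta\|\varphi g'\|_\infty\}$ and fix a near-optimal $g\in C^1[0,1]$ for $\delta=n^{-1/2}$. Writing $f''(u)-f''(x)=\bigl[(f''-g)(u)-(f''-g)(x)\bigr]+\bigl[g(u)-g(x)\bigr]$ splits $R_x$ into two parts. The first is bounded in modulus by $\|f''-g\|_\infty(t-x)^2$, so that $n\,B_{n,0,j}(\cdot;x)$ applied to it is at most $n\|f''-g\|_\infty\,B_{n,0,j}\bigl((e_1-xe_0)^2;x\bigr)$; by the second-moment bound obtained in the proof of Proposition \ref{p2.2}b), namely $B_{n,0,j}\bigl((e_1-xe_0)^2;x\bigr)\le \frac{x(1-x)}{n}+2x\frac{j-1}{n}$, this is $\le C_j\|f''-g\|_\infty$, of the required size uniformly in $x$.

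For the smooth part I would use $|g(u)-g(x)|\le \|\varphi g'\|_\infty\bigl|\int_x^u \frac{dv}{\varphi(v)}\bigr|$ and the standard $\varphi$-distance estimate $\bigl|\int_x^u \frac{dv}{\varphi(v)}\bigr|\le C\,\frac{|u-x|}{\varphi(x)}$, valid away from the boundary. After integrating the factor $|t-u|$ this yields a bound of order $\|\varphi g'\|_\infty\,\varphi(x)^{-1}|t-x|^3$, so applying $n\,B_{n,0,j}(\cdot;x)$ reduces everything to the third absolute moment $\varphi(x)^{-1}\,n\,B_{n,0,j}\bigl(|e_1-xe_0|^3;x\bigr)$. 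Using $-\frac{j-1}{n}\le t_{n,k}^j-\frac{k}{n}\le 0$ from \cite{AcRa2016} to compare the nodes with $k/n$, I would transfer the classical estimate $B_n(|e_1-xe_0|^3;x)=O\bigl(\varphi(x)^3n^{-3/2}\bigr)$ to $B_{n,0,j}$ up to corrections of order $(j-1)^3n^{-3}$; the main term then contributes $O\bigl(\varphi(x)^2 n^{-1/2}\bigr)=O(n^{-1/2})$ and the correction $O(n^{-3/2})$, matching the $\delta\|\varphi g'\|_\infty$ part of the $K$-functional.

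I expect the main obstacle to be exactly the interplay between the degenerating weight and the $(j-1)/n$ shift of the nodes in the boundary layer $x\in[0,A/n]\cup[1-A/n,1]$, where $\varphi(x)\to 0$ and the estimate $\int_x^u dv/\varphi(v)\lesssim |u-x|/\varphi(x)$ is useless. There I would instead use the crude uniform bound $\bigl|\int_x^u \frac{dv}{\varphi(v)}\bigr|\le \pi$, which turns the smooth part into $\le \tfrac{\pi}{2}\|\varphi g'\|_\infty\,n\,B_{n,0,j}\bigl((e_1-xe_0)^2;x\bigr)\le C\|\varphi g'\|_\infty\,[x(1-x)+2x(j-1)]$; for $x\le A/n$ this is $O(n^{-1})\|\varphi g'\|_\infty\le O(n^{-1/2})\|\varphi g'\|_\infty$, so the endpoint contribution is in fact harmless (and at $x=0,1$ the remainder vanishes outright, since $t_{n,0}^j=0$, $t_{n,n}^j=1$). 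The genuinely careful work is therefore establishing the third-moment bound for $B_{n,0,j}$ with the right power of $\varphi$ (this is not recorded in the excerpt) and splitting the range of $x$ at $x\sim 1/n$ so that the central region uses the refined distance estimate and the boundary strips the uniform one. Collecting the two parts over both regions yields $\bigl|n B_{n,0,j}(R_x;x)\bigr|\le C(\|f''-g\|_\infty+n^{-1/2}\|\varphi g'\|_\infty)$, and passing to the infimum over $g$ gives the claimed bound with $C$ depending only on $j$.
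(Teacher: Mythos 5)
The paper does not actually prove this theorem --- it only quotes it from Finta \cite{Finta1} --- so your proposal has to be judged on its own merits rather than against an in-paper argument. Your overall skeleton is the standard (and almost certainly the intended) one: the Taylor reduction to $nB_{n,0,j}(R_x;x)$ is exact, the passage to the $K$-functional equivalent of $\omega_\varphi^1$ is legitimate, the non-smooth part is correctly controlled by $n\|f''-g\|_\infty B_{n,0,j}((e_1-xe_0)^2;x)\le C_j\|f''-g\|_\infty$, and the central region $x\in[A/n,\,1-A/n]$ goes through with the third-moment estimate as you describe.

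The genuine gap is the \emph{right} boundary strip $x\in[1-A/n,1)$, which your argument silently omits: you verify the endpoint estimate only ``for $x\le A/n$''. The same computation fails on the other side, because the second-moment bound you import from Proposition \ref{p2.2}b), $nB_{n,0,j}\bigl((e_1-xe_0)^2;x\bigr)\le x(1-x)+2x(j-1)$, does not vanish as $x\to 1$; at $x=1-A/n$ it is $\approx 2(j-1)$, a constant, so the smooth part is only bounded by $C\|\varphi g'\|_\infty$ instead of the required $Cn^{-1/2}\|\varphi g'\|_\infty$. (The factor $x$ in front of $(j-1)$ saves the left strip but not the right one; this is exactly the asymmetry the paper illustrates with $d_n(x)$.) The missing ingredient is a refinement of the node estimate $0\le \frac{k}{n}-t_{n,k}^j\le\frac{j-1}{n}$ to one proportional to $\frac{n-k}{n^2}$; for instance for $j=2$ one has $\frac{k}{n}-t_{n,k}^2=\frac{k(n-k)}{n^2(n-1)}\bigl(\frac{k}{n}+t_{n,k}^2\bigr)^{-1}\le\frac{n-k}{n(n-1)}$, whence $x-B_{n,0,2}(e_1;x)\le\frac{1-x}{n-1}$ (this is the content of \cite[(3.7)]{CGR2012}, used in Proposition \ref{p2.2}a)). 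With the analogous bound $\frac{k}{n}-t_{n,k}^j\le C_j\frac{n-k}{n^2}$ for general $j$ you would get $nB_{n,0,j}\bigl((e_1-xe_0)^2;x\bigr)\le C_j\,x(1-x)$, which is $O(1/n)$ in \emph{both} strips and also cleans up the $(j-1)^3n^{-3}\varphi(x)^{-1}$ correction term in your third-moment step. That refined node inequality is not in the excerpt and is not proved in your proposal; until it (or some substitute handling $x$ near $1$) is supplied, the uniform claim over all $x\in[0,1]$ is not established.
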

 
 No quantitative version of a Voronovskaya-type theorem for the operators of Aldaz et al. is known to us.

\end{document}